\theoremstyle{definition}
\newtheorem{definition}{Definition}[section]
\newtheorem{example}[definition]{Example}
\theoremstyle{plain}
\newtheorem{lemma}[definition]{Lemma}
\newtheorem{proposition}[definition]{Proposition}
\newtheorem{theorem}[definition]{Theorem}
\newtheorem{assumption}[definition]{Assumption}
\def\CC{\mathbb{C}}
\def\EE{\mathbb{E}}
\def\NN{\mathbb{N}}
\def\RR{\mathbb{R}}
\newcommand{\vb}[1]{\mathbf{#1}}
\newcommand{\apolar}[1]{\langle{#1}\rangle}
\newcommand{\rank}{\mathop{\mathrm{rank}}}
\newcommand{\im}{\mathop{\mathrm{im}}}
\newcommand{\diag}{\mathop{\mathrm{diag}}}
\newcommand{\strong}[1]{\textbf{#1}}
\def\Gl{\mathrm{Gl}}
\def\institution{}
\def\streetaddress{}
\def\city{}
\def\country{}
\def\postcode{}
\journal{ }
\begin{document}

\begin{frontmatter}

\title{Tensor decomposition for learning Gaussian mixtures from moments}

\author{Rima Khouja, Pierre-Alexandre Mattei, Bernard Mourrain}


\address{%
  \institution{Inria d'Universit\'e C\^ote d'Azur},
  \streetaddress{2004 route des Lucioles, B.P. 93},
  \postcode{06902}
  \city{Sophia Antipolis},
  \country{France}

}




\begin{abstract}
  In data processing and machine learning, an important challenge is
  to recover and exploit models that can represent accurately the
  data. We consider the problem of recovering Gaussian mixture models
  from datasets. We investigate symmetric tensor decomposition methods
  for tackling this problem, where the tensor is built from empirical
  moments of the data distribution. We consider identifiable tensors,
  which have a unique decomposition, showing that moment tensors built
  from spherical Gaussian mixtures have this property.  We prove that
  symmetric tensors with interpolation degree strictly less than half
  their order are identifiable and we present an algorithm, based on
  simple linear algebra operations, to compute their decomposition.
  Illustrative experimentations show the impact of the tensor
  decomposition method for recovering Gaussian mixtures, in comparison
  with other state-of-the-art approaches.
\end{abstract}

\end{frontmatter}

\section{Introduction}

With the relatively recent evolutions of information systems over the last decades, many
observations, measurements, data are nowadays available on a variety
of subjects. However, too much information can kill the information and
one of the main challenges remains to analyse and to model these data,
in order to recover and exploit hidden structures.

To tackle this challenge, popular Machine Learning technologies have
been developed and used successfully in several application domains
(e.g. in image recognition \cite{he2016deep}).
These techniques can be grouped in two main classes:
Supervised machine learning techniques are
approximating a model by optimising the parameters of an enough
general model (e.g. a Convolution Neural Network) from training data.
Unsupervised machine learning techniques are deducing the parameters
characterising a model directly from the given data, using an apriori
knowledge on the model.  The supervised approach requires
annotated data, with a training step that can introduce some bias in
the learned model.
The unsupervised approach can be applied
directly on a given data set avoiding the costly step of annotating data,
but the quality of the output strongly depends on the type of models
to be recovered.

We consider the latter approach and show how methods from effective
algebraic geometry help finding hidden structure in data that can be
modelled by mixtures of Gaussian distributions.
The algebraic-geometric tool that we consider is tensor decomposition. It
consists in decomposing a tensor into a minimal sum of rank-1
tensors. This decomposition generalises the rank decomposition of a
matrix, with specific and interesting features. Contrarily to matrix
rank decomposition, the decomposition of a tensor is usually unique
(up to permutations) when the rank of the tensor, that is the minimal
number of rank-1 terms in a decomposition, is small compared to the
dimension of the space(s) associated to the tensor.
Such a tensor is called {\em identifiable}.
This property is of particular importance when the decomposition is
used to recover the parameters of a model. It guaranties the
validity of the recovering process and its convergence when the number of data increases.

It has been shown in \cite{Chiantini_2016} that for symmetric tensors,
if the rank of the tensor is strictly less than the rank $r_{g}$ of a
generic tensor of the same size, then the tensor is generically
identifiable, except in three cases.  We show in
\Cref{thm:identifiable} a more specific result: for a symmetric tensor
$T$ having {\em a} decomposition with $r$ points, if the
Hankel matrix associated to $T$ in a degree strictly bigger than the
degree of interpolation of the $r$ points is of rank $r$, then the tensor is
identifiable. We show in \Cref{prop:M3}, that under some assumption on
the spherical gaussian mixtures, a tensor of moments of order 3 of the
distribution is identifiable and its decomposition allows to recover
the parameters of the Gaussian mixture.

Several types of method have been developed to tackle the difficult
problem of tensor decomposition.
Direct methods based on simultaneous
diagonalisation of matrices built from slices of tensors have been
investigated for 3rd order multilinear tensors, e.g. in
\cite{harshman70,
  SanchezTensorialresolutiondirect1990,
  LeurgansDecompositionThreeWayArrays1993a,
  DomanovCanonicalPolyadicDecomposition2014}
or for multilinear tensors of rank smaller than the lowest dimension  in
\cite{DeLathauwerLinkCanonicalDecomposition2006, LucianiCanonicalPolyadicDecomposition2014}.
In his proof on lower bounds of tensor ranks, Strassen showed in
\cite[Theorem 4.1]{StrassenRankoptimalcomputation1983} that a
3rd order multilinear tensor is of rank $r$ if it can be embedded into a
tensor with slices of rank $r$ matrices, which are simultaneously diagonalised.

For symmetric tensor decomposition, a method based on flat extension of Hankel
matrices or commutation of multiplication operators has been proposed
in \cite{BrachatSymmetrictensordecomposition2010a} and extended to
multi-symmetric tensors in
\cite{BernardiGeneraltensordecomposition2013}.
This approach is closely related to the simultaneous diagonalisation of
tensor slices, but follows a more algebraic perspective.
Eigenvectors of symmetric tensors have been used to compute their decompositions in
\cite{OedingEigenvectorstensorsalgorithms2013a}. In
\cite{harmouch:hal-01440063}, Singular Value Decomposition and
eigenvector computation are used to decompose a symmetric tensor, when
its rank is smaller than the smallest size of its Hankel matrix in degree less than half
the order of the tensor.
In \Cref{sec:3}, we describe a new algorithm, involving Singular Value
Decomposition and simultaneous diagonalisation, to compute the decomposition of an
identifiable tensor, which interpolation degree is smaller that half
the order of the tensor.

Numerical methods such as homotopy continuation have been applied
to tensor decomposition in
\cite{HauensteinHomotopytechniquestensor2019, BernardiTensordecompositionhomotopy2017}.
Distance minimisation methods to compute low rank
approximations of tensors have also been investigated.
Alternating Least Squares (ALS) methods, updating alternately the
different factors of the tensor decomposition, is a popular approach
(see e.g.
\cite{Carroll1970, 3923712d8c7b4f8da616f88a35b3f34b,harshman70,KoBa09}), but suffers
from a slow convergence \cite{EMH,doi:10.1137/110843587}.
Other iterative methods such as quasi-Newton methods have been
considered to improve the convergence speed. See
e.g. \cite{Hayashi1982,
  doi:10.1080/10618600.1999.10474853,doi:10.1137/100808034,doi:10.1137/090763172,doi:10.1137/120868323,10.1016/j.csda.2004.11.013,Breiding2018}
for multilinear tensors.
A Riemannian Newton iteration for symmetric tensors is presented in \cite{KHOUJA2022175}.
In \cite{KileelSubspacepowermethod2019}, a method for decomposing real
even-order symmetric tensors, called Subspace Power Method (SPM), and
similar to the power method for matrix eigenvector computation, is proposed.
In these methods, the choice of the initial decomposition is
crucial. In the applications of these algorithms,
the initial point is often chosen at random, yielding approximate
decompositions which can hardly be controlled.
Tensor decomposition methods have numerous applications
\cite{KoBa09}. Some of them
were exploited more recently in Machine Learning.
In \cite{HsuLearningmixturesspherical2013}, symmetric tensor
decompositions for moment tensors are studied for spherical
Gaussian mixtures.
Moment methods have been further investigated for Latent
Dirichlet Allocation models, topic or multiview models
in
\cite{JMLR:v15:anandkumar14b,
JanzaminSpectralLearningMatrices2019a}.
In \cite{ruffini2017clustering}, a tensor decomposition technique
based on Alternate Least Squares (ALS) is used to initialise the Expectation
Maximisation (EM) algorithm, for a mixture of discrete distributions
(which are not Gaussian distributions).
An overview of tensor decomposition methods in Machine Learning can be
found in \cite{RabanserIntroductionTensorDecompositions2017}.

After reviewing Gaussian mixtures and moment methods in
\Cref{sec:2}, we present in \Cref{sec:3} an algebraic symmetric tensor
decomposition method for identifiable tensors. In \Cref{sec:4}, we
apply this algorithm for recovering Gaussian mixtures and
show its impact on providing good initialisation point in the EM
algorithm, in comparison with other state-of-the-art approaches.

\section{Gaussian mixtures and high order moments}\label{sec:2}

In this section, we review Gaussian mixture models and their applications to clustering.

\subsection{Gaussian mixtures}

Suppose that we wish to deal with some Euclidean data $x \in \mathbb{R}^m$, coming from a population composed of $r$ homogeneous sub-populations (often called \emph{clusters}). A reasonable assumption is then that each sub-population can be modelled using a simple probability distribution (e.g. Gaussian). This idea is at the heart of the notion of \emph{mixture distribution}. The prime example of mixture is the \emph{Gaussian mixture}, whose probability density over $\mathbb{R}^m$ is defined as
\begin{equation}
\label{eq:mixture}
    p_\theta(x) = \sum_{j=1}^r \omega_j \mathcal{N}(x | \mu_j, \Sigma_j),
\end{equation}
where $\mathcal{N}( \cdot | \mu, \Sigma)$ denotes the Gaussian density
with mean $\mu \in \mathbb{R}^m$ and definite positive covariance matrices $\Sigma \in \mathcal{S}_m^{++}$. The mixture is parametrised by a typically unknown $\theta = (\omega_1,...,\omega_r,\mu_1,...,\mu_r,$\\$\Sigma_1,...,\Sigma_r)$, composed of
\begin{itemize}
    \item $\omega = (\omega_1,...,\omega_r)$, that belong to the $r$-simplex and correspond to the cluster proportions,
    \item $\mu_j$ and $\Sigma_j$, that correspond respectively to the mean and covariance of each cluster $j \in \{1,...,r\}$.
\end{itemize}
Gaussian mixtures are ubiquitous objects in statistics and machine learning, and own their popularity to many reasons. Let us briefly mention a few of these.

\paragraph{Density estimation} If $r$ is allowed to be sufficiently large, it is possible to approximate any probability density using a Gaussian mixture (see e.g. \cite{nguyen2020approximation}). This motivates the use of Gaussian mixtures as powerful density estimators that can be subsequently used for downstream tasks such as missing data imputation \cite{di2007imputation}, supervised classification \cite{hastie1996discriminant}, or image classification \cite{sanchez2013image} and denoising \cite{houdard2018high}.

\paragraph{Clustering} Perhaps the most common use of Gaussian mixtures is \emph{clustering}, also called \emph{unsupervised classification}. The task of clustering consists in uncovering homogeneous groups among the data at hand. Within the context of Gaussian mixtures, each group generally corresponds to a single Gaussian distribution, as in Equation \eqref{eq:mixture}. If the parameters of a mixture are known, then each point may be clustered using the posterior probabilities obtained via Bayes's rule:
\begin{equation}
\label{eq:bayes}
  \forall x \in \mathbb{R}^m, k \in \{1,...,r\}, \;  \textup{Pr}(x \textup{ belongs to cluster } j ) = \frac{\omega_j \mathcal{N}(x | \mu_j, \Sigma_j)}{p_\theta(x)}.
\end{equation}

Detailed reviews on mixture models and their applications, notably to clustering, can be found in \cite{fraley2002model,bouveyron2019model,mclachlan2019finite}.

\subsection{Learning mixture models}

The main statistical question pertaining mixture models is to estimate the parameters $\theta = (\omega_1,...,\omega_r,\mu_1,...,\mu_r,\Sigma_1,...,\Sigma_r)$ based on a data set $x_1,...,x_n$. Typically, $X_1,...,X_n$ are assumed to be independent and identically distributed random variables with common density $p_\textup{data}$. The problem of statistical estimation is then to find some $\theta$ such that $p_\theta \approx p_\textup{data}$. There are many approaches to this question, the most famous one being the \emph{maximum likelihood method}. Maximum likelihood is based on the idea that maximising the \emph{log-likelihood function}
\begin{equation}
    \ell(\theta) = \sum_{i=1}^n \log p_\theta(x_i),
\end{equation}
will lead to appropriate values of $\theta$. One heuristic reason of the good behaviour of maximum likelihood is that $\ell(\theta)$ can be seen as a measure of how likely the observed data is, according to the mixture model $p_\theta$. This means that the maximum likelihood estimate will be the value of $\theta$ that renders the observed data the likeliest. Another interesting interpretation of maximum likelihood in information-theoretic: when $n \longrightarrow \infty$, maximising the log-likelihood is equivalent to minimising the Kullback-Leibler divergence (an information-theoretic measure of distance between probability distributions) between $p_\theta$ and $p_\textup{data}$, thus giving a precise sense to the statement $p_\theta \approx p_\textup{data}$ (see e.g. \cite[Section 1.6.1]{bishop2006pattern}). For more details on the properties of maximum likelihood, see e.g. \cite[Section 5.5]{van2000asymptotic}.

In the specific case of a mixture model, performing maximum-likelihood is however complex for several reasons. Firstly, as shown for instance by \cite{le1990maximum}, finding a global maximum is actually often ill-posed in the sense that some problematic values of $\theta$ will
lead to $\ell(\theta) = \infty$ while being very poor models of the
data. While focusing on local rather global maxima will fix this first
issue in a sense, iterative optimisation algorithms are likely to
pursue these unfortunate global maxima. Because of the peculiarities
of mixture likelihoods, the most popular algorithm for maximising
$\ell(\theta)$ is the \emph{expectation maximisation} (EM,
\cite{EMalgo}) algorithm, an iterative algorithm specialised for dealing with log-likelihoods of latent variable models. The EM algorithm is usually preferred to more generic gradient-based optimisation algorithms \cite{xu1996convergence}. In a nutshell, at each iteration, the EM algorithm clusters the data using Equation \eqref{eq:bayes}, and then computes the mean and covariance of each cluster. This iterative scheme is related to another popular clustering algorithm known as $k$-means (the close relationship between the two algorithms is detailed in \cite[Section 9]{bishop2006pattern}). A key issue when using the EM algorithm for a Gaussian mixture is the choice of initialisation. Indeed, a poor choice may lead to degenerate solutions, extremely slow convergence, or poor local optima (see \cite{baudry2015mixtures} and references therein). We will see in this paper that good initial points can be obtained by using another estimation method called the \emph{method of moments} (as was previously noted by \cite{ruffini2017clustering} in a context of mixtures of multivariate Bernoulli distributions).

The \emph{method of moments} is a general alternative to maximum likelihood. The idea is to choose several functions $g_1 : \mathbb{R}^m \longrightarrow \mathbb{R}^{q_1},...,g_d : \mathbb{R}^m \longrightarrow \mathbb{R}^{q_d}$ called \emph{moments}, and to find $\theta$ by attempting to solve the system of equations
\begin{equation}
\label{eq:mom}
    \left\{\begin{matrix}
\mathbb{E}_{x \sim p_\textup{data}}[g_1(x)] =  \mathbb{E}_{x \sim p_\theta}[g_1(x)] \\
... \\
\mathbb{E}_{x \sim p_\textup{data}}[g_d(x)] =  \mathbb{E}_{x \sim p_\theta}[g_d(x)].
\end{matrix}\right.
\end{equation}
Of course, since $p_\textup{data}$ is unknown, solving \eqref{eq:mom} is not feasible. However, one may replace the expected moments by empirical versions, and solve instead
\begin{equation}
\label{eq:mom2}
    \left\{\begin{matrix}
\frac{1}{n} \sum_{i=1}^n g_1(x_i)=  \mathbb{E}_{x \sim p_\theta}[g_1(x)] \\
... \\
\frac{1}{n} \sum_{i=1}^n g_d(x_i) =  \mathbb{E}_{x \sim p_\theta}[g_d(x)].
\end{matrix}\right.
\end{equation}
A very simple example of this, in the univariate $m=1$ case, when $g_1(x) = x$, and $g_2(x) = x^2$. Then, solving \eqref{eq:mom} will ensure that the distributions of the model $p_\theta$ and the data $p_\textup{data}$ have the same mean and variance. However, many very different distributions have identical mean and variance! A natural refinement of the previous idea is to consider also higher-order moments $g_3(x) = x^3, g_4(x) = x^4,...$. This will considerably improve the estimates found using the method of moments. This approach was pioneered by \cite{pearson1894contributions} for learning univariate Gaussian mixtures. In the more general multivariate case $m>1$, following \cite{HsuLearningmixturesspherical2013}, the moments chosen can be tensor products, as we detail in the next section in case of a Gaussian mixture with spherical covariances.

\section{Learning structure from tensor decomposition} \label{sec:3}
In this section, we describe the moment tensors revealing the
structure of spherical Gaussian mixtures and how it can be decomposed
using standard linear algebra operations.

Let $\vb X= (X_{1}, \ldots,X_{m})$ be a set of variables. The ring of
polynomials in $\vb X$ with coefficients in $\CC$ is denoted $\CC[\vb
X]$. The space of homogeneous polynomials of degree $d\in \NN$ is
denoted $\CC[\vb X]_{d}$. We recall that a symmetric
tensor $T$ of order $d$ (with real coefficients) can be represented by
an homogeneous polynomial of degree $d$ in the variables $\vb X$  of the form
$$
T(\vb X)= \sum_{|\alpha|=d} T_{\alpha} {d \choose \alpha} \vb X^{\alpha}
$$
where $\alpha=(\alpha_{1}, \ldots, \alpha_{n})\in \NN^{m}$,
$|\alpha|= \alpha_{1}+ \cdots + \alpha_{m}=d$,
$T_{\alpha}\in \RR$,
${d \choose \alpha}={d!\over \alpha_{1}! \cdots \alpha_{m}!}$,
$\vb X^{\alpha}= X_{1}^{\alpha_{1}}\cdots X_{m}^{\alpha_{m}}$.

A decomposition of $T$ as a sum of $d^{\mathrm th}$ power of linear forms is of the form
\begin{equation}\label{eq:waring:dec}
T(\vb X)= \sum_{i=1}^{r} \omega_{i} (\xi_{i}\cdot \vb X)^{d}
\end{equation}
where $\xi_{i}=(\xi_{i,1}, \ldots, \xi_{i,m})\in \CC^{m}$ and
$(\xi_{i}\cdot \vb X)=\sum_{j=1}^{m} \xi_{i,j} X_{j}$.
When $r$ is the minimal number of terms in such a decomposition, it is
called the rank of $T$ and the decomposition is  called a rank
decomposition (or a Waring decomposition) of $T(\vb X)$.

We say that the decomposition is unique
if the lines spanned by $\xi_{1},\ldots, \xi_{r}$ form a unique set of
lines with no repetition.
In this case, the decomposition of $T$ is unique after normalisation
of the vectors $\xi_{i}$ up to permutation (and sign change when $d$ is even).
A tensor $T$ with a unique decomposition  is called an {\em identifiable} tensor. Then the
Waring decompositions of $T$ are of the form $T(\vb X)=\sum_{i=1}^{r}
\omega_{i} \lambda_{i}^{-d} (\lambda_{i}\, \xi_{i}\cdot \vb X)^{d}$
for $\lambda_{i}\neq 0$, $i \in [r]$.

Given a random variable $x\in \RR^{m}$, its moments are $T_{\alpha}=
\EE[x_{1}^{\alpha_{1}}\cdots x_{m}^{\alpha_{m}}]$ for
$\alpha=(\alpha_{1}, \ldots, \alpha_{m})\in \NN^{m}$. The symmetric
tensor of all moments of order $d$ of $x$ is
$$
\EE[(x \cdot \vb X)^{d}] = \sum_{|\alpha|=d} \EE[x_{1}^{\alpha_{1}}\cdots x_{m}^{\alpha_{m}}] {d \choose \alpha} \vb X^{\alpha}.
$$

\subsection{The structure of the moment tensor}
We aim at recovering the hidden structure a random variable, from the
decomposition of its $d^{\mathrm{th}}$ order moment tensor. This is
possible in some circumstances, that we detail hereafter.

\begin{assumption}\label{ass:gm} The random variable $x\in \RR^{m}$ is
  a mixture of spherical Gaussians of probability density \eqref{eq:mixture}
  with parameters
$\theta = (\omega_1,...,\omega_r,\mu_1,...,\mu_r,\sigma_{1}^{2}
I_{m},,...,\sigma_r^{2} I_{m})$
  such that $r\le m$.
\end{assumption}

\begin{theorem}[\cite{HsuLearningmixturesspherical2013}]\label{thm:Mi} Under the previous assumption, let
\begin{itemize}
\item $\tilde{\sigma}^2$ be the smallest eigenvalue of $\EE[(x -\EE[x])\otimes (x -\EE[x])]$
and $v$ a corresponding unit eigenvector,
\item $M_{1}(\vb X) = \EE[(x \cdot \vb X) (v \cdot (x- \EE[x]))^{2}]$,
\item $M_{2}(\vb X) = \EE[(x \cdot \vb X)^{2}] -\tilde{\sigma}^{2}
  \|\vb X\|^{2}$,
\item $M_{3}(\vb X) = \EE[(x \cdot \vb X)^{3}] -
 3\, \|\vb X\|^{2} M_{1}(\vb X)$.
\end{itemize}
Then $\tilde{\sigma}^{2}= \sum_{i=1}^{r} \omega_{i}\, \sigma_{i}^{2}$
and
\begin{equation}\label{eq:waring:M3}
M_{1}(\vb X)= \sum_{i=1}^{r} \omega_{i}\, \sigma_{i}^{2}\,  (\mu_{i} \cdot \vb X),\quad
M_{2}(\vb X)= \sum_{i=1}^{r} \omega_{i}\,  (\mu_{i} \cdot \vb X)^{2},\quad
M_{3}(\vb X)= \sum_{i=1}^{r}\omega_{i}\, (\mu_{i} \cdot \vb X)^{3}.
\end{equation}
\end{theorem}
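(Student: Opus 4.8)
The plan is to introduce the hidden cluster label and reduce every expectation to a conditional computation over a single Gaussian. Let $h\in\{1,\dots,r\}$ be the latent variable with $\mathrm{Pr}(h=i)=\omega_{i}$, so that conditionally on $\{h=i\}$ one may write $x=\mu_{i}+\sigma_{i}\,z$ with $z\sim\mathcal{N}(0,I_{m})$ independent of $h$. First I would record $\EE[x]=\bar\mu:=\sum_{i=1}^{r}\omega_{i}\mu_{i}$, and then, conditioning on $h$ and using that a spherical component has covariance $\sigma_{i}^{2}I_{m}$, obtain
\[
\EE\big[(x-\bar\mu)\otimes(x-\bar\mu)\big]
=\sum_{i=1}^{r}\omega_{i}\big((\mu_{i}-\bar\mu)\otimes(\mu_{i}-\bar\mu)+\sigma_{i}^{2}I_{m}\big)
=A+\Big(\sum_{i=1}^{r}\omega_{i}\sigma_{i}^{2}\Big)I_{m},
\]
where $A:=\sum_{i=1}^{r}\omega_{i}\,(\mu_{i}-\bar\mu)\otimes(\mu_{i}-\bar\mu)$ is positive semidefinite.

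Next I would identify $\tilde\sigma^{2}$. Since $\sum_{i}\omega_{i}(\mu_{i}-\bar\mu)=0$ is a nontrivial linear relation (all $\omega_{i}>0$), the vectors $\mu_{1}-\bar\mu,\dots,\mu_{r}-\bar\mu$ span a subspace of dimension at most $r-1$, which is strictly less than $m$ by \Cref{ass:gm}; hence $A$ is singular and, being positive semidefinite, has smallest eigenvalue $0$. Therefore the smallest eigenvalue of $\EE[(x-\bar\mu)\otimes(x-\bar\mu)]$ equals $\tilde\sigma^{2}=\sum_{i}\omega_{i}\sigma_{i}^{2}$, which is the first claim. A corresponding unit eigenvector $v$ then satisfies $Av=0$, and since the quadratic form of $A$ at $v$ is $\sum_{i}\omega_{i}\,(v\cdot(\mu_{i}-\bar\mu))^{2}=0$ with all $\omega_{i}>0$, this forces $v\cdot\mu_{i}=v\cdot\bar\mu$ for every $i$ --- the fact that will make $M_{1}$ collapse to a pure linear form.

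The three identities of \eqref{eq:waring:M3} then follow by conditioning on $\{h=i\}$ and expanding. Writing $x\cdot\vb X=\mu_{i}\cdot\vb X+\sigma_{i}(z\cdot\vb X)$ and, by the orthogonality just obtained, $v\cdot(x-\bar\mu)=\sigma_{i}(v\cdot z)$, I would expand the relevant powers and use the Gaussian moment identities $\EE[z\cdot\vb X]=0$, $\EE[(z\cdot\vb X)^{2}]=\|\vb X\|^{2}$, $\EE[(v\cdot z)^{2}]=\|v\|^{2}=1$, together with the vanishing of all third-order moments of $z$ (in particular $\EE[(z\cdot\vb X)(v\cdot z)^{2}]=0$ and $\EE[(z\cdot\vb X)^{3}]=0$). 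This gives the conditional expectations
\[
\EE\big[(x\cdot\vb X)(v\cdot(x-\bar\mu))^{2}\mid h=i\big]=\sigma_{i}^{2}\,(\mu_{i}\cdot\vb X),\quad
\EE\big[(x\cdot\vb X)^{2}\mid h=i\big]=(\mu_{i}\cdot\vb X)^{2}+\sigma_{i}^{2}\|\vb X\|^{2},
\]
and $\EE[(x\cdot\vb X)^{3}\mid h=i]=(\mu_{i}\cdot\vb X)^{3}+3\sigma_{i}^{2}\|\vb X\|^{2}(\mu_{i}\cdot\vb X)$. Averaging over $i$ with weights $\omega_{i}$ yields $M_{1}(\vb X)=\sum_{i}\omega_{i}\sigma_{i}^{2}(\mu_{i}\cdot\vb X)$, then $\EE[(x\cdot\vb X)^{2}]=\sum_{i}\omega_{i}(\mu_{i}\cdot\vb X)^{2}+\tilde\sigma^{2}\|\vb X\|^{2}$ and $\EE[(x\cdot\vb X)^{3}]=\sum_{i}\omega_{i}(\mu_{i}\cdot\vb X)^{3}+3\|\vb X\|^{2}M_{1}(\vb X)$; substituting into the definitions of $M_{2}$ and $M_{3}$ gives exactly \eqref{eq:waring:M3}.

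I expect the only delicate point to be the second paragraph: it is precisely the hypothesis $r\le m$ that forces $\lambda_{\min}(A)=0$, which both pins down $\tilde\sigma^{2}$ and produces the orthogonality $v\perp(\mu_{i}-\bar\mu)$; without the latter, $M_{1}$ would carry an additional contribution involving the quadratic form of $A$ and would not reduce to a linear form. The remaining steps are routine bookkeeping with low-order moments of a standard Gaussian.
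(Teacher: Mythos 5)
Your proof is correct. The paper does not actually prove \Cref{thm:Mi} --- it imports the statement from \cite{HsuLearningmixturesspherical2013} without proof --- and your argument (law of total covariance to write the centred second moment as $A+\bigl(\sum_i\omega_i\sigma_i^2\bigr)I_m$, the rank deficiency of $A$ forced by $r\le m$ to pin down $\tilde\sigma^2$ and the orthogonality $v\perp(\mu_i-\bar\mu)$, then conditional Gaussian moment computations) is precisely the standard derivation given in that reference, including the one genuinely delicate point you flag about why $M_1$ collapses to a linear form.
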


To analyse the properties of the decomposition  \eqref{eq:waring:M3},
we introduce the apolar product on tensors:
For two homogeneous polynomials $p(\vb X)=\sum_{|\alpha|=d}{\binom{d}{\alpha}p_{\alpha}\vb X^{\alpha}}$
and $q(\vb X)=\sum_{|\alpha|=d}{\binom{d}{\alpha}q_{\alpha}\vb X^{\alpha}}$
of degree $d$, in $\CC[\vb X]_d$, their apolar product is
$$
\apolar{p,q}_d:=\sum_{|\alpha|=d}{\binom{d}{\alpha}\bar{p}_{\alpha}q_{\alpha}}.
$$
The apolar norm of $p$ is $||p||_d=\sqrt{\apolar{p,p}_d}=\sqrt{\sum_{|\alpha|=d}{\binom{d}{\alpha}\bar{p}_{\alpha}p_{\alpha}}}$.
The apolar product is invariant by a linear change of variables of the
unitary group $U_{m}$:
$\forall u\in U_{m}, \apolar{p(u\, \vb X),q(u\, \vb  X)}_{d}=\apolar{p(\vb X),q(\vb X)}_{d}$.

It also satisfies the following properties.
For $v\in \CC^{m}$, $v(\vb X)^{d}= (v \cdot \vb X)^{d}=(v_1X_1+\cdots+v_mX_m)^d,
p \in
\CC[\vb X]_{d}, q\in \CC[\vb X]_{d-1}$, we have :
\begin{itemize}
    \item $\apolar{(v \cdot \vb X)^{d},p}_{d} = p(\bar{v})$,
    \item $\apolar{p,X_i q}_{d} =\frac{1}{d}\apolar{\partial_{X_i}p, q}_{d-1}$.
\end{itemize}

For an homogeneous polynomial $T$ of degree $d\in \NN$ (or
equivalently a symmetric tensor of order $d$), we define the {\em
  Hankel} operator of $T$ in degree $k\le d$ as the map
$$
H_{T}^{k,d-k}: p \in \CC[\vb X]_{d-k}\mapsto
[\apolar{T, \vb X^{\alpha} \,p}_{d}]_{|\alpha|=k} \in \CC^{s_{k}}
$$
where $s_{k}= {m+k-1 \choose k} = \dim \CC[\vb X]_{k}$ is the number of
monomials of degree $k$ in $\vb X$.
The matrix of $H_{T}^{k,d-k}$ in the basis $(\vb X^{\beta})_{|\beta|=d-k}$ is
$$
H_{T}^{k,d-k}=(\apolar{T,\vb X^{\alpha+\beta}}_d)_{|\alpha|=k, |\beta|=d-k}.
$$
From the properties of the apolar product, we see that
$H_{T}^{1,d-1}:p \mapsto {1 \over d}[\apolar{\partial_{X_{i}}T,
  p}_{d-1}]_{1 \le i\le m}$.
For $\xi\in \CC^{m}$ and $k\in \NN$, let
$\xi^{(k)}=(\xi^{\alpha})_{|\alpha|=k}$.
We also check that if $T=(\xi\cdot \vb X)^{d}$
with $\xi\in \CC^{m}$, then
$H_{(\xi\cdot \vb X)^{d}}^{k,d-k}= \bar{\xi}^{\,(k)}\otimes \bar{\xi}^{\,(d-k)}$
is of rank $1$ and its
image is spanned by the vector $\bar{\xi}^{\,(k)}$.

\begin{proposition}\label{prop:M3} Assume that $r\le m$, $w_i>0$ for $i \in [r]$ and
  $\mu_{1},\ldots,\mu_{r}\in \RR^{m}$ are linearly independent.
  The symmetric tensor $M_{3}(\vb X)$ is
  identifiable, of
  rank $r$ and has a unique Waring decomposition satisfying \eqref{eq:waring:M3}.
\end{proposition}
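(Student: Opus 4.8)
The plan is to verify the hypotheses of \Cref{thm:identifiable} for the tensor $M_{3}$. Since $\mu_{1},\ldots,\mu_{r}$ are linearly independent, the $r$ points $\mu_{1},\ldots,\mu_{r}$ (viewed projectively) are in general position enough that their degree of interpolation is $1$: indeed, $r$ linearly independent vectors in $\RR^m$ impose independent linear conditions, so there is a linear form separating any one of them from the others, and the interpolation degree of the point set is $1$. We have $d=3$, so $1 < d/2$ fails to be literally true, but $1 = \lfloor (d-1)/2\rfloor$ and more to the point we only need a degree $k$ strictly bigger than the interpolation degree and strictly less than $d$; taking $k=2$ gives $1 < 2 < 3$. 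So the first step is: show the interpolation degree of $\{\mu_1,\dots,\mu_r\}$ is $1$ using linear independence.

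The second step is to compute the Hankel matrix $H_{M_3}^{2,1}$ and show it has rank $r$. From \eqref{eq:waring:M3} and the fact that $H^{k,d-k}$ is linear in $T$ with $H_{(\xi\cdot\vb X)^d}^{k,d-k}=\bar\xi^{(k)}\otimes\bar\xi^{(d-k)}$, we get
\begin{equation*}
H_{M_3}^{2,1}=\sum_{i=1}^{r}\omega_i\,\mu_i^{(2)}\otimes\mu_i^{(1)}
= \mu^{(2)}\,\diag(\omega_1,\ldots,\omega_r)\,(\mu^{(1)})^{\top},
\end{equation*}
where $\mu^{(1)}$ is the $m\times r$ matrix with columns $\mu_i^{(1)}=\mu_i$ and $\mu^{(2)}$ is the $s_2\times r$ matrix with columns $\mu_i^{(2)}=(\mu_i^\alpha)_{|\alpha|=2}$ (the coefficients are real, so conjugation is vacuous). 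Since $\mu_1,\ldots,\mu_r$ are linearly independent, $\mu^{(1)}$ has rank $r$; since the Veronese map $\xi\mapsto\xi^{(2)}$ is a morphism whose restriction to linearly independent points yields linearly independent images (the matrix $\mu^{(2)}$ has rank $r$ because, e.g., the $r\times r$ minor indexed by the squares $X_j^2$ corresponding to a basis among the $\mu_i$, or more robustly a Vandermonde-type argument, is nonsingular), $\mu^{(2)}$ also has rank $r$. As $\omega_i>0$, the middle factor is invertible, hence $\rank H_{M_3}^{2,1}=r$.

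The third step is simply to invoke \Cref{thm:identifiable}: $M_3$ has a decomposition with $r$ points (namely \eqref{eq:waring:M3}), and the Hankel matrix in degree $2$, which is strictly bigger than the interpolation degree $1$ of those $r$ points, has rank $r$; therefore $M_3$ is identifiable. Uniqueness of the decomposition then forces any Waring decomposition of $M_3$ with $r$ summands to have support on the lines spanned by $\mu_1,\ldots,\mu_r$; matching it against \eqref{eq:waring:M3} and using $\omega_i>0$ fixes the normalisation, so \eqref{eq:waring:M3} is the unique such decomposition and $\rank M_3=r$ (it cannot be smaller, since $r=\rank H_{M_3}^{2,1}\le\rank M_3$).

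The main obstacle I anticipate is the rank claim for $\mu^{(2)}$, i.e.\ that linear independence of the $\mu_i$ survives under the degree-$2$ Veronese embedding; this is where one must be slightly careful, since linear independence is not automatically preserved by nonlinear maps. The clean way is: if $\sum_i c_i\,\mu_i^{(2)}=0$ then the quadratic form $q(\vb X)=\sum_i c_i(\mu_i\cdot\vb X)^2$ vanishes identically (its coefficient vector is, up to the binomial weights, exactly $\sum_i c_i\mu_i^{(2)}$); but a sum of squares of linearly independent linear forms that is identically zero forces all $c_i=0$ (diagonalise, or evaluate the associated symmetric bilinear form on the dual basis). Everything else is bookkeeping with the apolarity identities already recorded in the excerpt.
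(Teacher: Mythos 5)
Your route is sound and genuinely different from the paper's. You reduce the statement to \Cref{thm:identifiable}: you check that $\iota(\{\mu_1,\ldots,\mu_r\})=1$ follows from linear independence, prove $\rank H_{M_3}^{2,1}=r$ by factoring it as $\mu^{(2)}\diag(\omega_1,\ldots,\omega_r)(\mu^{(1)})^{t}$ and showing that the degree-$2$ Veronese images $\mu_i^{(2)}$ remain independent, and then apply the theorem with $k=2\in[\iota+1,d]$. All of this is correct (the worry you flag about the rank of $\mu^{(2)}$ is resolved exactly as you suggest, by diagonalising or evaluating on a dual basis), and the forward reference is legitimate since \Cref{thm:identifiable} is proved independently of \Cref{prop:M3}. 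The paper instead gives a self-contained argument that does not invoke the theorem: it normalises $\mu_i=e_i$ by a change of coordinates in $\Gl_m$, reads off that $H_{M_3}^{1,2}$ has rank $r$ with kernel equal to the degree-$2$ forms vanishing at $e_1,\ldots,e_r$, and deduces that the point set of any length-$r$ decomposition is determined, up to scaling, as the zero locus of that kernel. Your version exhibits the proposition as an instance of the general mechanism; the paper's keeps the exposition self-contained and in order.

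There is, however, one genuine gap, in the last clause of the statement: the uniqueness of the Waring decomposition \emph{satisfying} \eqref{eq:waring:M3}. Identifiability only determines the lines $\CC\,\mu_i$, so the remaining freedom is $\tilde\mu_i=\lambda_i\mu_i$, $\tilde\omega_i=\lambda_i^{-3}\omega_i$ with $\lambda_i\neq 0$. Your proposal to kill this freedom ``using $\omega_i>0$'' does not work: any $\lambda_i>0$ preserves positivity of the weights and reality of the $\tilde\mu_i$, so positivity fixes nothing. What actually pins down the $\lambda_i$ is the requirement that the \emph{same} pairs $(\omega_i,\mu_i)$ also reproduce $M_2$: writing $M_2(\vb X)=\sum_{i=1}^r\lambda_i\tilde\omega_i(\tilde\mu_i\cdot\vb X)^2$ and using that the quadrics $(\tilde\mu_i\cdot\vb X)^2$ are linearly independent --- which you have already proved, since it is precisely your rank claim for $\mu^{(2)}$ --- determines $\lambda_1,\ldots,\lambda_r$ uniquely. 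This is exactly how the paper closes the argument; you have all the ingredients in hand but did not assemble them for this final step.
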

\begin{proof}
Assume that $M_{3}(\vb X)$ has a decomposition of the form \eqref{eq:waring:M3}.
Since the vector $\mu_{1}, \ldots, \mu_{r}$ are linearly independent,
by a linear change of coordinates in $\Gl_{m}$, we can further assume that
$\mu_{1}= e_{1}, \ldots, \mu_{r}= e_{r}$ are
the first $r$ vectors of the canonical basis of
$\RR^{m}$. In this coordinate system, $M_{3}(\vb X)= \sum_{i=1}^{r}
 X_{i}^{3}$ and the matrix $H_{M_{3}}^{1,2}$ in a
convenient basis has a $r\times r$ identity block and zero elsewhere. Thus $H_{M_{3}}^{1,2}$ is of rank $r$.
Its kernel of dimension ${1\over 2}\, m \, (m+1) -r$ is spanned by
the polynomials $X_{i} X_{j}$ with
$(i,j) \neq (k,k)$ for $k \in [r]$. The kernel of $H_{M_{3}}^{1,2}$ is thus
the space of homogeneous polynomials of degree $2$, vanishing at
$e_{1}, \ldots, e_{r} \in \RR^{n}$.

If $M_{3}(\vb X)$ can be decomposed as $M_{3}(\vb
X)=\sum_{i=1}^{r'}\omega'_{i}\, (\mu'_{i} \cdot \vb X)^{3}$ with
$\omega'_{i}\in \CC$, $\mu'_{i}\in \CC^{m}$ and $r'<r$,
then  $H_{M_{3}}^{1,2}$, as a sum of $r'<r$ matrices $\omega'_{i} H_{(\mu'_{i}\cdot
\vb X)^{3}}^{1,2}$ of rank $1$, would be of rank smaller than $r'<r$, which is a
contradiction. Thus a minimal decomposition of $M_{3}(\vb X)$ is of length
$r$ and $r$ is the rank of $M_{3}(\vb X)$.

Let us show that the decomposition \eqref{eq:waring:M3} of $M_{3}(\vb X)$ is unique up
to a scaling of the vector $\mu_{i}$, i.e. that $M_{3}(\vb X)$ is
identifiable.
For any Waring decomposition $M_{3}(\vb X)=\sum_{i=1}^{r}\omega'_{i}\,
(\mu'_{i} \cdot \vb X)^{3}$, the vectors  $\mu'_{1}, \ldots, \mu'_{r}$ are linear independant, since
$\mu'_{i}$ spans $\textup{im} H_{(\mu'_{i}\cdot \vb X)^{3}}^{1,2}$ and
$H_{M_{3}}^{1,2} = \sum_{i=1}^{r} \omega'_{i}H_{(\mu'_{i}\cdot \vb
  X)^{3}}^{1,2}$ is of rank $r$.
As $\mu'_{1}, \ldots,
\mu'_{r}$ can be transformed into $e_{1}, \ldots, e_{r}$ by a linear change of variables, $\ker
H_{M_{3}}^{1,2}$ is also the vector space of homogeneous polynomials of degree $2$, vanishing at
$\mu'_{1}, \ldots, \mu'_{r} \in \CC^{m}$.
Therefore, the set of $\{\mu'_{1}, \ldots, \mu'_{r}\}$ coincides, up to a scaling, with the
set of points $\{\mu_{1}, \ldots, \mu_{r}\}$ of another Waring
decomposition of $M_{3}(\vb X)=\sum_{i=1}^{r}\omega_{i}\, (\mu_{i} \cdot \vb
X)^{3}$. This shows that $M_{3}(\vb X)$ is identifiable.

Therefore, a Waring decomposition of $M_{3}(\vb X)$ is of the form
$M_{3}(\vb X) = \sum_{i=1}^{r}\tilde{\omega}_{i} \, (\tilde{\mu}_{i} \cdot \vb X)^{3}$
with $\tilde{\omega}_{i} =  \lambda^{-3} \omega_{i}$,
$\tilde{\mu}_{i}= \lambda_{i} \mu_{i}$ and $\lambda_{i} \neq 0$ for $i
\in [r]$.
As $\tilde{\mu}_{1}, \ldots, \tilde{\mu}_{r}$ are linearly independent, the
homogeneous polynomials
$(\tilde{\mu}_{1}\cdot \vb X)^{2}, \ldots, (\tilde{\mu}_{r}\cdot \vb
X)^{2}$ are also linearly independant in $\CC[\vb X]_{2}$ (by a linear
change of variables, they are equivalent to $X_{1}^{2}, \ldots, X_{r}^{2}$).
Consequently, the relation
$$
M_{2}(\vb X)= \sum_{i=1}^{r} {\omega_{i}} ({\mu}_{i}\cdot \vb X)^{2}= \sum_{i=1}^{r} \lambda _{i}\tilde{\omega_{i}} (\tilde{\mu}_{i}\cdot \vb X)^{2}
$$
defines uniquely $\lambda_{1}, \ldots, \lambda_{r}$, and
$M_{3}(\vb X)$ has a unique Waring decomposition, which satisfies the relations \eqref{eq:waring:M3}.
\end{proof}

Under Assumption \ref{ass:gm}, the hidden structure of the random
variable $x$ can thus be recovered using Algorithm \ref{algo:recover}.

\begin{algorithm}[ht]\caption{\label{algo:recover}Recovering the
    hidden structure of a Gaussian mixture}

\strong{Input:} The moment tensors $M_{1}(\vb X), M_{2}(\vb X), M_{3}(\vb X)$.
\begin{itemize}
 \item Compute a Waring decomposition of $M_{3}(\vb X)$ to get
   $\tilde{\omega}_{i} \in \RR, \tilde{\mu}_{i}
   \in \RR^{m}$, $i\in [r]$ such that $M_{3}(\vb X)=\sum_{i=1}^{r}
   \tilde{\omega}_{i}\, (\tilde{\mu}_{i} \cdot \vb X)^{3}$.
 \item Solve the system $\sum_{i=1}^{r}
   \tilde{\omega}_{i}\, (\tilde{\mu}_{i}\cdot \vb X)^{2} \lambda_{i}= M_{2}(\vb X)$ to get
   $\lambda_{i} \in \RR$ and $\omega_{i}= \lambda_{i}^{3} \tilde{\omega}_{i}\in \RR_{+}$,
   $\mu_{i} = \lambda_{i}^{-1}\tilde{\mu_{i}}\in \RR^{m}$ such that
   $M_{3}(\vb X)=\sum_{i=1}^{r} {\omega}_{i}\,  ({\mu}_{i} \cdot \vb X)^{3}$ and
   $M_{2}(\vb X)=\sum_{i=1}^{r} {\omega}_{i} \, ({\mu}_{i} \cdot \vb X)^{2}$.
 \item Solve the system $\sum_{i=1}^{r} \omega_{i}
  (\mu_{i} \cdot \vb X)  \sigma_{i}^{2} = M_{1}(\vb X)$
   to get $\sigma_{i}^2\in \RR_{+}$.
 \end{itemize}
\strong{Output:} $\omega_{i}\in \RR_{+}, \mu_{i} \in \RR^{n}$,
$\sigma_{i}^2\in \RR_{+}$ for $i\in [r]$.
\end{algorithm}
This yields the parameters $\omega_{i}\in \RR_{+}, \mu_{i} \in
\RR^{m}$,
$\sigma_{i}\in \RR_{+}$ for $i\in [r]$ of the Gaussian mixture $x$.

In the experimentation, the moments involved in the tensors $M_{i}$ will be
approximated by empirical moments and we will compute an approximate
decomposition of the empirical moment tensor $\hat{M}_{3}(\vb X)$.

\subsection{Decomposition of identifiable tensors}
We describe now an important step of the approach, which is computing a
Waring decomposition of a tensor.
In this section, we consider a tensor $T\in \CC[\vb X]_{d}$ of order $d\in \NN$ with a
Waring decomposition of the form $T = \sum_{i=1}^{r} \omega_{i}\,
(\xi_{i}\cdot \vb X)^{d}$  with  $\omega_{i} \in \CC, \xi_{i}\in
\CC^{m}$, that we recover by linear algebra techniques, under some hypotheses.

\begin{definition} The interpolation degree $\iota({\Xi})$ of $\Xi=\{\xi_{1},
\ldots, \xi_{r}\} \subset \CC^{m}$ is the smallest degree $k$ of a family of homogenous interpolation
polynomials $u_{1}, \ldots, u_{r}\in \CC[\vb X]_{k}$ at the points $\Xi$
($u_{i}(\xi_{j})= \delta_{i,j}$ for $i,j \in [r]$).
\end{definition}

For any $d\ge\iota(\Xi)$, there exists a family $(\tilde{u}_{i})_{i\in [r]}$ of interpolation
polynomials of degree $d$, obtained from an interpolation family
$(u_{i})_{i\in [r]}$
in degree $\iota(\Xi)$ as $ \tilde{u}_{i} = {(\lambda\cdot \vb
X)^{d-\iota(\Xi)} \over (\lambda\cdot \xi_{i})^{d-\iota(\Xi)}  }u_{i}$
for a generic $\lambda\in \CC^{m}$ such that $\lambda \cdot \xi_{i}
\neq 0$ for $i\in [r]$.

Notice that if the points $\Xi=\{\xi_{1}, \ldots, \xi_{r}\}$ are
linearly independent (and therefore
$r\le m$), then $\iota(\Xi)=1$ since a family of linear forms
interpolating $\Xi$ can be constructed.

If $k \ge \iota(\Xi)$, then the evaluation map
$\vb e_{\Xi}^{(k)}:p\in \CC[\vb X]_{k} \mapsto (p(\xi_{1}), \ldots,
p(\xi_{r}))\in \CC^{r}$ is surjective. Its kernel is the space of homogeneous
polynomials of degree $k$ vanishing at $\Xi$. Any supplementary space
admits a basis  $u_{1}, \ldots, u_{r}$, which is an interpolating
family for $\Xi$ in degree $k$.
A property of the interpolation degree is the following:
\begin{lemma}\label{lem:kereval}
For $k> \iota(\Xi)$, the common roots of $\ker \vb e^{(k)}_{\Xi}$ is
the union $\cup_{i=1}^{r} \CC\, \xi_{i}$ of lines spanned by $\xi_{1}, \ldots, \xi_{r}\in \CC^{m}$.
\end{lemma}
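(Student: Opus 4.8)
The plan is to prove the two inclusions of the equality separately. The inclusion $\bigcup_{i=1}^{r}\CC\,\xi_{i}\subseteq\{\text{common roots of }\ker\vb e_{\Xi}^{(k)}\}$ is immediate and holds for every $k$: any $p\in\ker\vb e_{\Xi}^{(k)}$ vanishes at each $\xi_{i}$ by definition of the kernel, and, being homogeneous, it then vanishes on the whole line $\CC\,\xi_{i}$. All the content is in the reverse inclusion, which is where the hypothesis $k>\iota(\Xi)$ enters.

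For the reverse inclusion I would argue by contradiction: fix a common root $\eta$ of $\ker\vb e_{\Xi}^{(k)}$, assume $\eta\neq 0$ and $\eta\notin\CC\,\xi_{i}$ for every $i\in[r]$, and derive that \emph{every} homogeneous polynomial of degree $k-1$ vanishes at $\eta$ — which is absurd, since evaluating the monomials $X_{j}^{k-1}$ would force $\eta_{j}=0$ for all $j$. Two ingredients combine to give this. First, since $k-1\ge\iota(\Xi)$, there is an interpolation family $v_{1},\ldots,v_{r}\in\CC[\vb X]_{k-1}$ with $v_{i}(\xi_{j})=\delta_{i,j}$; because $\eta\notin\CC\,\xi_{i}$ and $\xi_{i}\neq 0$, the linear forms vanishing at $\xi_{i}$ do not all vanish at $\eta$, so one may pick a linear form $\ell_{i}$ with $\ell_{i}(\xi_{i})=0$ and $\ell_{i}(\eta)\neq 0$. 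Then $\ell_{i}\,v_{i}$ is homogeneous of degree $k$ and vanishes at every $\xi_{j}$ — the factor $\ell_{i}$ kills the index $j=i$, the factor $v_{i}$ kills the indices $j\neq i$ — hence $\ell_{i}\,v_{i}\in\ker\vb e_{\Xi}^{(k)}$, so $\ell_{i}(\eta)\,v_{i}(\eta)=0$ and therefore $v_{i}(\eta)=0$ for all $i$. Second, for any $q\in\ker\vb e_{\Xi}^{(k-1)}$ and any linear form $\ell$ with $\ell(\eta)\neq 0$ (one exists since $\eta\neq 0$), the product $\ell\,q$ lies in $\ker\vb e_{\Xi}^{(k)}$, so $\ell(\eta)\,q(\eta)=0$ and thus $q(\eta)=0$; that is, every element of $\ker\vb e_{\Xi}^{(k-1)}$ vanishes at $\eta$.

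To close the argument I would invoke the splitting already noted in the text: since $k-1\ge\iota(\Xi)$, the evaluation map $\vb e_{\Xi}^{(k-1)}$ is surjective, the $v_{i}$ map to the standard basis of $\CC^{r}$ and are hence independent modulo $\ker\vb e_{\Xi}^{(k-1)}$, so $\{v_{1},\ldots,v_{r}\}$ together with a basis of $\ker\vb e_{\Xi}^{(k-1)}$ spans all of $\CC[\vb X]_{k-1}$. Combining the two ingredients, $\eta$ annihilates a spanning set of $\CC[\vb X]_{k-1}$, hence all of $\CC[\vb X]_{k-1}$, the desired contradiction. Therefore $\eta\in\bigcup_{i=1}^{r}\CC\,\xi_{i}$, completing the proof. (The degenerate case $r=1$, where $\iota(\Xi)$ may be $0$, is handled by the same argument with the obvious conventions, or directly.)

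The step I expect to demand the most care is the bookkeeping around the strict inequality $k>\iota(\Xi)$: it is used, in effect twice, to produce an interpolation family \emph{in degree $k-1$} and to guarantee surjectivity of $\vb e_{\Xi}^{(k-1)}$ so that the $v_{i}$ together with $\ker\vb e_{\Xi}^{(k-1)}$ genuinely span $\CC[\vb X]_{k-1}$. The strictness is essential and not a mere technicality — at $k=\iota(\Xi)$ the common zero set of $\ker\vb e_{\Xi}^{(k)}$ can be strictly larger than $\bigcup_{i}\CC\,\xi_{i}$ — so the write-up should make clear exactly where $k-1\ge\iota(\Xi)$ is invoked.
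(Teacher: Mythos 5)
Your proof is correct, but it takes a genuinely different route from the paper's. The paper disposes of the lemma in one line by citing commutative algebra: $\iota(\Xi)+1$ is the Castelnuovo--Mumford regularity of the vanishing ideal $I(\Xi)$, hence $I(\Xi)$ is generated in degrees $\le \iota(\Xi)+1\le k$, so the degree-$k$ graded piece $\ker \vb e^{(k)}_{\Xi}=I(\Xi)_{k}$ has the same zero locus as the full ideal, namely $\cup_{i}\CC\,\xi_{i}$. Your argument replaces that citation with an elementary, self-contained computation: you manufacture elements of $\ker\vb e^{(k)}_{\Xi}$ as products $\ell\cdot q$ of a linear form with a degree-$(k-1)$ polynomial (an interpolant $v_i$ times a linear form $\ell_i$ vanishing at $\xi_i$ but not at the putative extra root $\eta$, and $\ell\cdot q$ for $q\in\ker\vb e^{(k-1)}_{\Xi}$), deduce that $\eta$ kills the spanning set $\{v_1,\dots,v_r\}\cup\ker\vb e^{(k-1)}_{\Xi}$ of $\CC[\vb X]_{k-1}$, and reach the contradiction $\eta=0$. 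The key steps all check out: the existence of an interpolation family in degree $k-1\ge\iota(\Xi)$ and the surjectivity of $\vb e^{(k-1)}_{\Xi}$ are both established in the surrounding text, and the choice of $\ell_i$ is legitimate because the common zero locus of the hyperplane of linear forms vanishing at $\xi_i$ is exactly the line $\CC\,\xi_i$. What the two approaches buy: the paper's is shorter and situates the statement in standard theory, but leans on a nontrivial external fact (that $\iota(\Xi)+1$ is the regularity of an ideal of points) whose verification is itself of comparable depth to your argument; yours is longer but uses only linear algebra and makes the role of the strict inequality $k>\iota(\Xi)$ completely explicit, which is a genuine expository gain. Your closing remark that strictness is essential is well taken, and your handling of the degenerate case $k-1=0$ is fine since the contradiction already appears at the step $v_1(\eta)=0$ with $v_1$ a nonzero constant.
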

\begin{proof}
As $\iota(\Xi)+1$ is the
Castelnuovo-Mumford regularity of the vanishing ideal $I(\Xi)=\{p \in
\CC[\vb X] \mid p \textup{ homogeneous, } p(\xi) =0 \textup{ for } \xi \in \Xi\}$ \cite{eisenbud_geometry_2005}[Ch.4],
it is generated in degree $k>\iota(\Xi)$ and the common roots of
$\ker \vb e^{(k)}_{\Xi}=I(\Xi)_{k}$ is $\cup_{i=1}^{r} \CC\, \xi_{i}$.
\end{proof}

Hereafter, we show that tensors $T$ such that $\rank H_{T}^{k,d-k} = r$
for $k>\iota(\Xi)+1$ are identifiable and we describe a numerically robust algorithm to
compute their Waring decomposition.

Let $U=(U_{\alpha, j})_{|\alpha|=k, j\in [r]} \in \CC^{s_{k}\times r}$
be such that $\im U = \im H_{T}^{k,d-k}$ and
$U_{i}= (U_{e_{i}+\alpha, j})_{|\alpha|=k-1, j\in [r]}$ be the
submatrices of $U$ with the rows indexed by the monomials divisible by
$X_{i}$ for $i \in [m]$.

\begin{theorem}\label{thm:identifiable}
Let $T\in \CC[\vb X]_{d}$ with a decomposition $T =
\sum_{i=1}^{r} \omega_{i}\, (\xi_{i}\cdot \vb X)^{d}$  with
$\omega_{i} \in \CC$ and $\xi_{i}=(\xi_{i,1}, \ldots, \xi_{i,n})\in
\CC^{m}$ such that $\rank H_{T}^{k,d-k} = r$ for some $k\in
[\iota(\xi_{1}, \ldots, \xi_{r})+1, d]$. Then $T$ is identifiable of rank $r$
and there exist invertible matrices $E\in \CC^{s_{k}\times s_{k}}$,
$F\in \CC^{r\times r}$ such that
\begin{equation}\label{eq:simdiag}
E^{t}\, U_{i}\, F = \left[ \begin{array}{c}
\Delta_{i}\\
0
\end{array}
\right]
\end{equation}
with $\Delta_{i}=\diag(\bar{\xi}_{1,i}, \ldots, \bar{\xi}_{r,i})$ for $i\in [m]$.
For any pair $(E,F)$, which diagonalises simultaneously $[U_{1},
\ldots, U_{m}]$ as in \eqref{eq:simdiag}, there exist unique $\omega'_{1}, \ldots,
\omega'_{r}\in \CC$ such that $T = \sum_{i=1}^{r} \omega'_{i}\, (\xi'_{i}\cdot \vb X)^{d}$
with $\bar{\xi}'_{i}= ( (\Delta_{1})_{i,i}, \ldots, (\Delta_{m})_{i,i})$.
\end{theorem}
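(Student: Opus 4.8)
The plan is to exploit the rank-one structure of the Hankel operators attached to the individual summands. Set $\Xi=\{\xi_1,\ldots,\xi_r\}$ and fix $k\in[\iota(\Xi)+1,d]$ with $\operatorname{rank}H_T^{k,d-k}=r$. First I would record that $H_T^{k,d-k}=\sum_{i=1}^r \omega_i\,\bar\xi_i^{(k)}\otimes\bar\xi_i^{(d-k)}$, using the earlier computation $H_{(\xi_i\cdot\vb X)^d}^{k,d-k}=\bar\xi_i^{(k)}\otimes\bar\xi_i^{(d-k)}$. Since this sum of $r$ rank-one matrices has rank exactly $r$, the vectors $\bar\xi_1^{(k)},\ldots,\bar\xi_r^{(k)}$ must be linearly independent (and likewise the $\bar\xi_i^{(d-k)}$), and $\omega_i\neq 0$ for all $i$; hence $\operatorname{im}H_T^{k,d-k}=\operatorname{span}\{\bar\xi_1^{(k)},\ldots,\bar\xi_r^{(k)}\}$, and no decomposition with fewer than $r$ terms can exist, so $\operatorname{rank}T=r$. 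Linear independence of $\bar\xi_i^{(k)}$ is equivalent to the evaluation map $\vb e_\Xi^{(k)}$ being surjective, i.e. $k\ge\iota(\Xi)$, which is part of the hypothesis; this is the bridge between the Hankel-rank condition and the interpolation degree.

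Next, identifiability: suppose $T=\sum_{i=1}^{r'}\omega'_i(\xi'_i\cdot\vb X)^d$ is any decomposition. As in the argument above, $H_T^{k,d-k}=\sum\omega'_i\bar\xi_i'^{(k)}\otimes\bar\xi_i'^{(d-k)}$ forces $r'\ge r$, hence by minimality $r'=r$, the $\bar\xi_i'^{(k)}$ are a basis of $\operatorname{im}H_T^{k,d-k}$, and the $\xi'_i$ are linearly independent in $\CC^m$ (since their $k$-th Veronese images are independent). Mimicking \Cref{prop:M3}, I would argue that $\ker H_T^{k,d-k}=I(\Xi)_k=I(\Xi')_k$; since $k>\iota(\Xi)$ (strict!), \Cref{lem:kereval} applies and says the common zero locus of this space is $\bigcup_i\CC\xi_i=\bigcup_i\CC\xi'_i$. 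Therefore $\{\xi'_1,\ldots,\xi'_r\}$ coincides with $\{\xi_1,\ldots,\xi_r\}$ up to scaling and permutation, so $T$ is identifiable of rank $r$, and the $\omega'_i$ are then determined once the scalings $\lambda_i$ are fixed (uniqueness of coefficients follows since the polynomials $(\xi'_i\cdot\vb X)^d$ are linearly independent).

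Now the structural statement \eqref{eq:simdiag}. Let $U\in\CC^{s_k\times r}$ have $\operatorname{im}U=\operatorname{im}H_T^{k,d-k}=\operatorname{span}\{\bar\xi_i^{(k)}\}$; write $V=[\bar\xi_1^{(k)},\ldots,\bar\xi_r^{(k)}]\in\CC^{s_k\times r}$, so $U=VG$ for a unique invertible $G\in\CC^{r\times r}$. The key identity is that multiplication by $X_i$ acts on Veronese coordinates: the submatrix $V_i$ of $V$ indexed by monomials $X_i\vb X^\alpha$, $|\alpha|=k-1$, satisfies $V_i = W\,\Delta_i$, where $W=[\bar\xi_1^{(k-1)},\ldots,\bar\xi_r^{(k-1)}]\in\CC^{s_{k-1}\times r}$ and $\Delta_i=\operatorname{diag}(\bar\xi_{1,i},\ldots,\bar\xi_{r,i})$, because the $(X_i\vb X^\alpha)$-entry of $\bar\xi_j^{(k)}$ is $\bar\xi_{j,i}\,\bar\xi_j^\alpha$. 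Hence $U_i = V_i G = W\Delta_i G$. Choose a left inverse-type factorization: since $W$ has rank $r$ (as $k-1\ge\iota(\Xi)$, again using $k>\iota(\Xi)$, so Veronese images of $\Xi$ in degree $k-1$ are independent), pick $E\in\CC^{s_k\times s_k}$ invertible whose first $r$ rows, call that block $P\in\CC^{r\times s_k}$, satisfy $PW=I_r$ and whose remaining rows annihilate $\operatorname{im}W$; and set $F=G^{-1}$. Then $E^tU_iF$... — here I must be careful with transpose conventions, but the upshot is $E$ restricted appropriately gives $\left[\begin{smallmatrix}\Delta_i\\0\end{smallmatrix}\right]$ as claimed. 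For the converse ("for any pair $(E,F)$"), given any $E,F$ achieving \eqref{eq:simdiag}, the diagonal entries $((\Delta_1)_{i,i},\ldots,(\Delta_m)_{i,i})$ define points $\bar\xi'_i$; since simultaneous diagonalization with all $m$ coordinate slices recovers the points of the support (the $\Delta_i$ being forced to be $\operatorname{diag}(\bar\xi_{\pi(1),i},\ldots)$ for a common permutation $\pi$, by comparing eigenstructure), the $\xi'_i$ agree with the $\xi_i$ up to permutation and scaling, hence by identifiability $T=\sum\omega'_i(\xi'_i\cdot\vb X)^d$ for unique $\omega'_i$ (solve the resulting nonsingular linear system in the $\omega'_i$, the Veronese images being independent).

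The main obstacle I expect is twofold: first, pinning down the transpose/conjugation bookkeeping so that \eqref{eq:simdiag} comes out with $E^t$ on the left and $\bar\xi_{j,i}$ (not $\xi_{j,i}$) on the diagonal — the apolar product's conjugation is what produces the bars, and one must track it through $U=H_T^{k,d-k}$ carefully. Second, and more substantively, the converse direction requires showing that \emph{any} simultaneous diagonalizer $(E,F)$ must produce, up to a \emph{single} permutation common to all $i\in[m]$, the scaled support points — i.e. that the simultaneous-diagonalization problem for the pencil $\{U_1,\ldots,U_m\}$ has an essentially unique solution. This follows because the $r$ joint "eigenvectors" (columns of $F$) are determined: restricting to generic combinations $\sum_i t_iU_i$ gives a matrix with distinct diagonal entries $\sum_i t_i\bar\xi_{j,i}$ for generic $t$ (distinct because the $\xi_j$ are distinct lines), whose eigenvectors are then unique up to scaling, forcing $F$ to be $G^{-1}$ times a permutation-and-scaling. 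I would isolate this as the crux and handle the rest as routine linear algebra.
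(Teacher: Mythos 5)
Your proposal follows essentially the same route as the paper: the rank-one expansion $H_{T}^{k,d-k}=\sum_i \omega_i\,\bar{\xi}_i^{\,(k)}\otimes\bar{\xi}_i^{\,(d-k)}$ to get $\rank T=r$, identification of the degree-$k$ orthogonal of $\im H_{T}^{k,d-k}$ with $\ker \vb e_{\bar\Xi}^{(k)}$ and \Cref{lem:kereval} to recover the lines, the factorisation $U_i=W\Delta_i G$ with $E$ built from an interpolating family in degree $k-1$ together with a basis of $\ker \vb e_{\bar\Xi}^{(k-1)}$, and a generic-linear-combination argument for the uniqueness of the simultaneous diagonaliser (which the paper carries out concretely by pairing with a generic $\lambda\in\CC^m$). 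One parenthetical claim should be dropped: linear independence of the Veronese images $\bar{\xi}^{\prime\,(k)}_{i}$ does \emph{not} imply linear independence of the $\xi'_i$ in $\CC^{m}$ (the theorem allows $r>m$), but nothing downstream relies on it, since the recovery of the support goes through the zero locus of $\ker\vb e_{\bar\Xi}^{(k)}$ rather than through independence of the points themselves.
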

\begin{proof}
From the decomposition of $T$, we have
for
$k\le d$ that
$$
H_{T}^{k,d-k} = \sum_{i=1}^{r} \omega_{i} \, \bar{\xi}_{i}^{\,(k)} \otimes \bar{\xi}_{i}^{\,(d-k)}
$$
is a linear combination of $r$ Hankel matrices $\bar{\xi}_{i}^{\,(k)}
\otimes \bar{\xi}_{i}^{\,(d-k)}$ of rank $1$.
If $T$ is of rank $r'<r$, then using its decomposition of rank
$r'$,  $H_{T}^{k,d-k}$ would be of rank $\le r'<r$, which is a
contradiction. This shows that $T$ is of rank $r$.

As $\rank H_{T}^{k,d-k}=r$,
we deduce that the image of $H_{T}^{k,d-k}$ is spanned by
$\bar{\xi}_{1}^{\,(k)}, \ldots, \bar{\xi}_{r}^{(k)}$ and there exists
an invertible matrix $F\in \CC^{r \times r}$ such that
$$
U \, F  = [\bar{\xi}_{1}^{\,(k)}, \ldots, \bar{\xi}_{r}^{(k)}]
$$
For any polynomial $p\in \CC[\vb X]_{k}$, which coefficient vector in
the monomial basis $(\vb X^{\alpha})_{|\alpha|=k}$ is denoted $[p]$,
we have
$[p]^{t} U F = [p(\bar{\xi}_{1}), \ldots, p(\bar{\xi}_{r})]^{t}$.
This shows that $U^{\perp}=\{p\in \CC[\vb X]\mid
[p]^{t} U = 0\}$ is $\ker \vb e_{\bar{\Xi}}^{(k)}$.
By Lemma \ref{lem:kereval} since $k\ge  \iota(\bar{\Xi})$, the common roots of
the homogeneous polynomials in $\ker \vb e_{\bar{\Xi}}^{(k)}$ are the
scalar multiples of $\bar{\Xi}$.
Consequently, the set of lines spanned by the vectors ${\Xi}$ of a
Waring decomposition of $T$ is uniquely determined as the conjugate of the
zero locus of $U^{\perp}\subset \CC[\vb X]_{k}$ and $T$ is identifiable.

For any $p\in \CC[\vb X]_{k-1}$ represented by its coefficient
vector $[p]$ in the monomial basis $(\vb X^{\alpha})_{|\alpha|=k-1}$, we have
\begin{equation}\label{eq:eval}
[p]^{t} U_{i} F = [x_{i} p]^{t} U F = [\bar{\xi}_{1,i}\,
p(\bar{\xi}_{1}), \ldots, \bar{\xi}_{r,i}\, p(\bar{\xi}_{r})]^{t}.
\end{equation}

Let $E$ be the coefficient matrix of a basis $u_{1}, \ldots, u_{r},
v_{r+1}, \ldots, v_{s_{k-1}}$ of $\CC[\vb X]_{k-1}$, such that $u_{1},
\ldots, u_{r}$ is an interpolating family for
$\bar{\Xi}=\{\bar{\xi}_{1},\ldots,\bar{\xi}_{r}\}$
and $v_{r+1},\ldots, v_{s_{k-1}}$ is a basis of $\ker \vb
e_{\bar{\Xi}}^{(k-1)}$. The matrix $E$ is invertible by construction, and we
deduce from \eqref{eq:eval} that
$$
E^{t} U_{i} F = \left[
\begin{array}{c}
  \diag(\bar{\xi}_{1,i}, \ldots, \bar{\xi}_{r,i})\\
  0
\end{array}
\right].
$$

Let us show conversely that for any pair of matrices $(E',F')$, which
diagonalises simultaneously $[U_{1}, \ldots, U_{m}]$ as in \eqref{eq:simdiag} with $\Delta_{i}= \diag(\bar{\xi}'_{1,i}, \ldots, \bar{\xi}'_{r,i})$, there exist unique
$\omega'_{1}, \ldots, \omega'_{r}\in \CC$ such that $T = \sum_{i=1}^{r}
\omega'_{i}\, (\xi'_{i}\cdot \vb X)^{d}$.

Let $u'_{1}, \ldots, u'_{r}, v'_{r+1}, \ldots, v'_{s_{k-1}} \in
\CC[\vb X]$ be the polynomials corresponding to the columns of
$E'$. Then for a generic $\lambda = (\lambda_{1}, \ldots, \lambda_{r})\in
\CC^{m}$, we have
\begin{eqnarray*}
\diag((\lambda \cdot \bar{\xi}'_{1}), \ldots, (\lambda \cdot
  \bar{\xi}'_{r})) &=&
\sum_{i=1}^{m} \lambda_{i} [u'_{1},\ldots, u'_{r}]^{t} U_{i} F' =
\sum_{i=1}^{m} \lambda_{i} [u'_{1}, \ldots, u'_{r}]^{t} U_{i} F (F^{-1} F') \\
  &=&
[(\lambda \cdot \bar{\xi}_{j})\, u'_{i}(\bar{\xi}_{j})]_{i,j\in [r]} F^{-1} F'\\
&=&\diag((\lambda \cdot \bar{\xi}_{1}), \ldots, (\lambda \cdot
    \bar{\xi}_{r}))\,
[u'_{i}(\bar{\xi}_{j})]_{i,j\in [r]} F^{-1} F'.
\end{eqnarray*}
As $\lambda\in \CC^{m}$ is generic and $\lambda \cdot \bar{\xi}_{i}\neq 0$
for $i\in [r]$, we deduce that $\Delta=
[u'_{i}(\bar{\xi}_{j})]_{i,j\in [r]} F^{-1} F'$ is a diagonal and
invertible matrix
and that $\xi'_{i}= \bar{\Delta}_{i,i} \xi_{i}$ with $\Delta_{i,i}\neq
0$.

Then we have $(\xi'_{i}\cdot \vb X)^{d} =
\bar{\Delta}_{i,i}^{d}\, (\xi_{i}\cdot \vb X)^{d}$
and
$T = \sum_{i=1}^{r} \omega'_{i}\, (\xi'_{i}\cdot \vb X)^{d}$
with $\omega'_{i}= \bar{\Delta}_{i,i}^{-d} \omega_{i}$, which concludes the proof of
the theorem.
\end{proof}

This leads to Algorithm \ref{algo:decomp} to compute a Waring
decomposition of an identifiable tensor $T$.
\begin{algorithm}[ht]\caption{\label{algo:decomp}Decomposition of
 an identifiable tensor}
\strong{Input:} $T\in \CC[\vb X]_{d}$, which admits a decomposition with
$r$ points $\Xi=\{\xi_{1}, \ldots, \xi_{r}\}$ and $k > \iota(\Xi)$.

\begin{itemize}
 \item Compute the Singular Value Decomposition of $H_{T}^{k,d-k} = U\,
   S\, V^{t}$;
 \item Deduce the rank $r$ of $H_{T}^{k,d-k}$, take the first $r$
   columns of $U$ and build the submatrices $U_{i}$ with rows indexed by the monomials $(X_{i} \vb
   X^{\alpha})_{|\alpha|=k-1}$ for $i\in [n]$;
 \item Compute a simultaneous diagonalisation of the pencil $[U_{1}
   \ldots,U_{m}]$
   as $E^{t} U_{i} F = \left[
\begin{array}{c}
  \diag(\bar{\xi}_{1,i}, \ldots, \bar{\xi}_{r,i})\\
  0
\end{array}
\right]$ and deduce the points $\xi_{i}=(\xi_{i,1}, \ldots,
\xi_{i,m})\in \CC^{m}$ for $i\in [r]$;
 \item Compute the weights $\omega_{1}, \ldots, \omega_{r}$ by solving
   the linear system $T = \sum_{i=1}^{r} \omega_{i}\, (\xi_{i}\cdot \vb X)^{d}$;
 \end{itemize}

\strong{Output:} $\omega_{i}\in \CC$, $\xi_{i}\in \CC^{m}$ s.t. $T = \sum_{i=1}^{r} \omega_{i}\, (\xi_{i}\cdot \vb X)^{d}$.
\end{algorithm}
\section{Numerical experimentations}\label{sec:4}
The model used in this section is the Gaussian Mixture Model (GMM) with differing spherical covariance matrices. Recall that if $x=(x_1, \dots, x_n)$ is a sample of $n$ independent observations from $r$ multivariate Gaussian mixture with differing spherical covariance matrices of dimension $m$, and $h = (h_1,h_2,\ldots,h_n)$ is the latent variable that determine the component from which the observation originates, then:
\begin{equation*}
x_{i}\mid (h_{i}=k)\sim {\mathcal {N}}_{m}({\mu }_{k},\sigma_k^2 I_m)
~\text{where,}
\end{equation*}
\begin{equation*}
\mathrm{Pr}(h_{i}=k)=\omega_k,~\text{for}~k\in[r], ~\text{such that}~\sum_{k=1}^{r}{\omega_k=1.}
\end{equation*} The aim of statistical inference is to find the unknown parametrs $\mu_k$, $\sigma_k^2$ and $w_k$, for $k\in [r]$ from the data $x$. This can be done by finding the maximum likelihood estimation (MLE) i.e. finding the optimal maximum of the likelihood function associated to this model. The expectation maximisation algorithm (EM) \cite{EMalgo}, usually used for finding MLEs, is an iterative algorithm in which the initialisation i.e. the initial estimation of the latent parameters is crucial, since various initialisations can lead to different local maxima of the likelihood function, consequently, yielding different clustering partition. Thus, in this section we compare the clustering results obtained by different initialisation of the EM algorithm against the initialisation by the method of moments through examples of simulated (subsection \ref{simul}) and real (subsection \ref{real}) datasets. We fix a maximum of 100 iterations of the EM algorithm. The different initialisation considered in this section are the following:
\begin{itemize}
    \item The k-means method \cite{k-means} according to the following strategy:\\
    The best partition obtained out of 50 runs of the k-means algorithm.
    \item The method of moments, where \Cref{algo:recover} is applied to build the moments and \Cref{algo:decomp} is applied
      to the empirical moment tensor corresponding to $M_{3}(\vb X)$
      (see \Cref{thm:Mi}),
      with less than 5 Riemannian Newton iterations \cite{KHOUJA2022175} to
      reduce the distance between the empirical moment tensor and its decomposition.
    \item The Model-based hierarchical agglomerative clustering algorithm (MBHC) \cite{MBHC, MBHC1}.
    \item The emEM strategy \cite{emEM} as in \cite{Emem1} which makes 5 iterations for each of 50 short runs of EM, and follows the one which maximises the log-likelihood function by a long run of EM.
\end{itemize}
The k-means, MBHC and emEM are common strategies for initialising the EM algorithm for GMMs. The comparison among the different EM initialisation strategies is based on three measures: The  Bayesian Information Criterion (BIC) \cite{BIC,BIC1},  the Adjusted Rand Index (ARI) \cite{ARI}, and the error rate (errorRate). The BIC is a penalized-likelihood criterion given by the following formula
\begin{equation*}
    \mathrm{BIC}=-2 \ell(\hat{\theta})+\log (n)\nu,
\end{equation*} where $\ell$ is the log-likelihood function , $\hat{\theta}$ is the MLE which maximises the log-likelihood function and $\nu$ is the number of the estimated parameters. This criterion measures the quality of the model such that for comparing models the one with the largest BIC value among the other models is the most fitted to the studied dataset. The ARI criterion measures the similarity between the estimated clustering obtained by the applied model and the exact true clustering. Its value is bounded between 0 and 1. The more this measure is close to 1 the more the estimated clustering is accurate. The error rate measure can be viewed as an alternative of the ARI. In fact this criterion measures the minimum error between the predicted clustering and the true clustering, and thus low error rate means high agreement between the estimated and the true clustering. The former criteria as well as the EM algorithm are used from the tools of the package \texttt{mclust} \cite{mclust} in \textsf{R} programming language.

\subsection{Simulation}\label{simul}
We performed 100 simulations from each of the two models described in examples \ref{ex1} and \ref{ex2}. We counted the instances where each of the considered initialising strategies for the EM could find throughout the 100 simulated data and among the other initialisation methods the largest BIC, the highest ARI, ARI$\geq0.99$ (as in this case the clustering obtained is the most accurate) and the lowest errorRate. The values of the BIC, ARI, errorRate and consumed time of the different considered initialisation strategies for one dataset sampled according to the model of Example \ref{ex1} (resp. \ref{ex2}) are presented in Table \ref{table1} (resp. \ref{table2}), and Figure \ref{fig1} (resp. \ref{fig2}) shows a two-dimensional visualisation of the observations according to the first four features, the observations in the upper panels are labeled according to the actual clustering, while they are labeled in the lower panels according to the clustering obtained by the EM algorithm initialised by the method of moments. In order to have an estimation about the numerical stability of the obtained results, we repeat the same numerical experiment for each example 20 times and we compute the means (Table \ref{table5}, \ref{table6}) and the variances (values in parentheses in Table \ref{table5}, \ref{table6}) of the 20 percentages obtained of each of the BIC, ARI, ARI$\geq0.99$ and errorRate values for the different initialising strategies.\\
As we mentioned before the initialisation strategies considered in this comparison against the method of moments are common and have, in general, good numerical behavior. Nevertheless, we cannot expect all the initialisation strategies that exist for the EM algorithm to work well in all the cases \cite{emEM, em-initi1}. Hereafter, two examples are chosen in such a way to present some cases where the common initialisation strategies k-means, MBHC and emEM have some difficulties to provide a good initialisation to the EM algorithm for the GMMs with differing spherical covariance matrices, or in other words where the initialisation by the method of moments outperforms the other considered initialisations. For instance, we put in each of these two examples one cluster of small size (the blue cluster in Figure \ref{fig1}, the red cluster in Figure \ref{fig2}), we want to make the clusters overlap, since these initialisation strategies could misscluster the dataset if the clusters are intersecting. We notice that this choice of the mean vectors and the different variances in each of the two examples yields a dataset with the expected clustering characteristic.

\begin{example} \label{ex1}
In the first simulation example, a multivariate dataset (m=6) of n=1000 observations generated with r=4 clusters according to the following parameters:
\begin{itemize}
\item The probability vector: $\omega={(0.2782,0.0139,0.3324,0.3756)}^T$.
\item The mean vectors: $\mu_1={(-5.0, -9.0, 8.0, 8.0, 2.0, 5.0)}^T$, $\mu_2={(-7.0, 6.0, -1.0, 6.0, -8.0, -10.0)}^T$, $\mu_3={(-4.0, -10.0, -5.0, 1.0, 5.0, 4.0)}^T$, $\mu_4={(-6.0, 6.0, 5.0, 4.0, -1.0, -1.0)}^T$.
\item The variances: $\sigma_1^2=1.5$, $\sigma_2^2=2.5$, $\sigma_3^2=5.0$, $\sigma_4^2=15.0$.
\end{itemize}
\begin{table}[ht]
  \centering
  \caption{Numerical results of one data set of Example \ref{ex1}}
    \begin{tabular}{|c|c|c|c|c|}\hline
    Method&BIC&ARI&errorRate&time(s)\\\hline
    em\_km&-29590.48&0.8281&0.168&\textbf{0.045} \\
    em\_mom&\textbf{-29492.11}&\textbf{1.0}&\textbf{0.0}&0.547 \\
    em\_mbhc&-29594.97&0.8574&0.099&0.287 \\
    em\_emEM&-29593.18&0.8366&0.132&0.171 \\\hline
    \end{tabular}%
  \label{table1}%
\end{table}%
\begin{figure}[ht]
\centerline{\includegraphics[width=5in, height=5in]{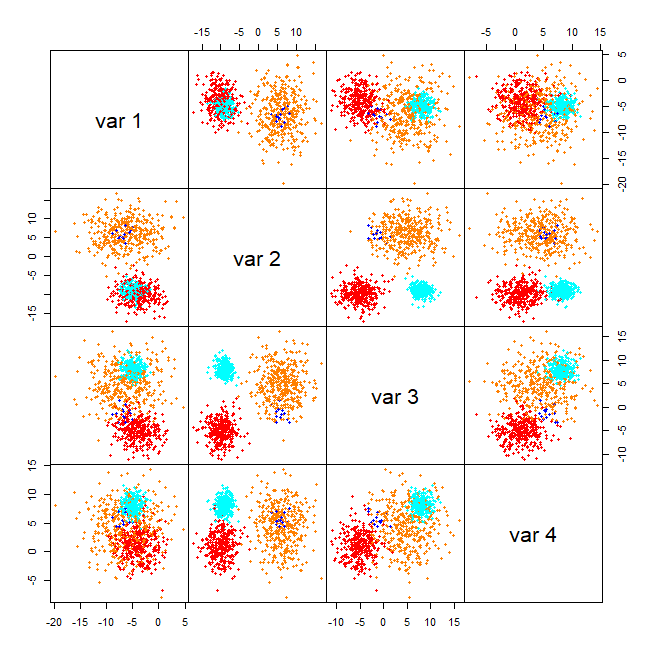}}
\caption{Scatterplot matrix for the sampled dataset of Example \ref{ex1} projected onto the first four variables (features): upper panels show scatterplots for pairs of variables in the
original clustering; lower panels show the clustering obtained by applying the EM algorithm initialised by the method of moments.}
\label{fig1}
\end{figure}
\begin{table}[ht]
  \centering
  \caption{Estimation of the stability of Example \ref{ex1} results }
    \begin{tabular}{|c|c|c|c|c|}\hline
    Method&BIC&ARI&$\text{ARI}\geq 0.99$&errorRate \\\hline
    em\_km&38.35\% (37.82)&47.6\% (21.41)&48.85\% (21.61)&47.6\% (21.2) \\
    em\_mom&\textbf{74.8\%} (41.01)&\textbf{88.75\%} (15.36)&\textbf{83.4\%} (18.36)&\textbf{88.60\%} (14.46) \\
    em\_mbhc&10.75\% (12.41)&15.9\% (17.57)&15.55\% (22.99)&15.9\% (19.46) \\
    em\_emEM&7.3\% (8.43)&14.5\% (8.05)&12.6\% (17.83)&14.95\% (7.52) \\\hline
    \end{tabular}%
  \label{table5}%
\end{table}%

\end{example}
\begin{example} \label{ex2}
In the second simulation example, a multivariate dataset (m=5) of n=1000 observations generated with r=3 clusters according to the following parameters:
\begin{itemize}
\item The probability vector: $\omega={(0.0930, 0.2151, 0.6918)}^T$.
\item The mean vectors: $\mu_1={(7.0, -4.0, -4.0, -6.0, -4.0)}^T$, $\mu_2={(2.0, -4.0, -6.0, -10.0, -3.0)}^T$, $\mu_3={(4.0, -4.0, -5.0, 6.0, 1.0)}^T$.
\item The variances: $\sigma_1^2=5.0$, $\sigma_2^2=10.0$, $\sigma_3^2=15.0$.
\end{itemize}
\begin{table}[ht]
  \centering
  \caption{Numerical results of one data set of Example \ref{ex2}}
    \begin{tabular}{|c|c|c|c|c|}\hline
     Method&BIC&ARI&errorRate&time(s) \\\hline
    em\_km&-28360.30&0.4352&0.309&\textbf{0.051} \\
    em\_mom&\textbf{-28246.02}&\textbf{0.9498}&\textbf{0.03}&0.504 \\
    em\_mbhc&-28358.67&0.3197&0.384&0.292 \\
    em\_emEM&-28360.42&0.4408&0.296&0.141 \\\hline
    \end{tabular}%
  \label{table2}%
\end{table}%

\begin{figure}[ht]
\centerline{\includegraphics[width=5in, height=5in]{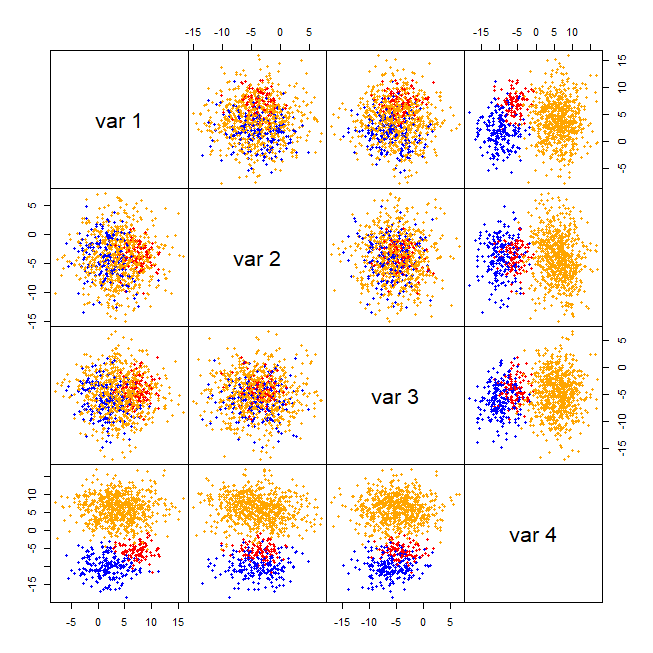}}
\caption{Scatterplot matrix for the sampled dataset of Example \ref{ex2} projected onto the first four variables (features): upper panels show scatterplots for pairs of variables in the
original clustering; lower panels show the clustering obtained by applying the EM algorithm initialised by the method of moments.}
\label{fig2}
\end{figure}
\begin{table}[ht]
  \centering
  \caption{Estimation of the stability of Example \ref{ex2} results}
    \begin{tabular}{|c|c|c|c|c|}\hline
    Method&BIC&ARI&$\text{ARI}\geq 0.99$&errorRate \\\hline
    em\_km&0.45\% (0.576) &0.05\% (0.05) &0.0\% (0.0)&0.1\%(0.095) \\
    em\_mom&\textbf{50.0\%} (18.63)&\textbf{92.35\%} (9.82)&\textbf{0.0\%} (0.0)&\textbf{92.1\%} (7.46) \\
    em\_mbhc&49.35\% (19.82)&2.45\% (3.63)&0.0\% (0.0)&2.45\% (2.58) \\
    em\_emEM&0.3\% (0.326)&5.2\% (4.48)&0.0\% (0.0)&5.9\% (5.36) \\\hline
    \end{tabular}%
  \label{table6}%
\end{table}

\end{example}

The Table \ref{table5}, \ref{table6} show that in Example \ref{ex1}, \ref{ex2} the best results among the considered initialising strategies are for the method of moments. In fact, in the former two tables we see that the method of moments found throughout the 100 simulated datasets, in average (by runing the numerical experiment 20 times), the largest BIC, highest ARI, ARI$\geq 0.99$ and lowest errorRate among the other initialisation strategies in more instances than all the other considered initialisation method, implying in this context marked outperformance for the moments initialisation method. Note that the consumed time (see. Table \ref{table1}, \ref{table2}) tends to be higher in the method of moments than in the other initialisation strategies. This is expected since stochastic approaches (to which the methods k-means, MBHC and emEM belong) outperform the deterministic approaches (as the method of moments) in this term.
\clearpage

\subsection{Real data}\label{real}
In this subsection we present four examples of real datasets, for which we know already their number of clusters, and we report the different BIC, ARI and errorRate values as well as the consumed time attained by the EM algorithm initialised by the different considered initialisation strategies and used with the GMM of different spherical covariance matrices. The explored real data are: The famous iris data \cite{iris, iris1} widely used as an example of clustering to test the algorithms, Diabetes \cite{diabetes}, olive oil \cite{oliveoil}, and MNIST \cite{deng2012mnist}.
\begin{example}[Iris] \label{iris}
The iris dataset contains four physical measurements (length and width of sepals and petals) for 50 samples of three species of iris (setosa, virginica and versicolor). The number of features is $m=4$ and the number of clusters is $r=3$.\\
\begin{table}[ht]
  \centering
  \caption{Numerical results of Example \ref{iris}}
    \begin{tabular}{|c|c|c|c|c|}\hline
    Method&BIC&ARI&errorRate&time(s) \\\hline
    em\_km&-1227.6656 &0.6199 &0.167 &\textbf{0.007} \\
    em\_mom& \textbf{-1227.6676}&\textbf{0.6410} & \textbf{0.153} &0.203 \\
    em\_mbhc&-1227.6696 &0.6199 &0.167 &\textbf{0.007}  \\
    em\_emEM&-1227.6495 &0.6302 &0.160 &0.045 \\\hline
    \end{tabular}%
  \label{table7}
\end{table}%
The four initialisation strategies yield the same BIC value. The ARI and the errorRate values are slightly better with the moment initialisation among the other considered initialisation strategies. On the other hand, the consumed time is clear higher in the moment method initialisation.
\end{example}
\begin{example}[Diabetes]\label{diab}
The Diabete dataset \cite{diabetes} contains three measurements: glucose, insulin and sspg; made on 145 non-obese adult patients classified into three types of diabetes: Normal, Overt, and Chemical. Herein, in this example $m=r=3$. We apply the different initialisation strategies for the EM algorithm, the \Cref{table8} shows the results.
\begin{table}[H]
  \centering
  \caption{Numerical results of Example \ref{diab}}
    \begin{tabular}{|c|c|c|c|c|}\hline
    Method&BIC&ARI&errorRate&time(s) \\\hline
    em\_km&-5363.06&0.3371&0.289&\textbf{0.007} \\
    em\_mom&-5222.11&\textbf{0.6355}&\textbf{0.144}&0.380  \\
    em\_mbhc&\textbf{-5221.32}&\textbf{0.6355}&\textbf{0.144}&0.008  \\
    em\_emEM&-5221.33&0.6207&0.151&0.049 \\\hline
    \end{tabular}%
  \label{table8}
\end{table}%
Despite the fact that k-means method is the fastest method in this example, the ARI and the BIC are noticeably lower than in the other methods. Concerning the method of moments, it succeeds to have quite similar scores to the other methods in this example, but with a bigger computation time.   
\end{example}
\begin{example}[Olive oil]\label{olive}
The olive oil data set contains the chemical composition (8 chemical properties) of 572 olive oils. They are derived from three different macro-areas in Italy (South, Sardinia and Centre North). The dataset contains nine regions from
which the olive oils were taken in Italy. Thus we can cluster this dataset according to the macro-areas ($r=3$) or the region ($r=9$). As the number of features in this dataset is $m=8$, we choose $r=3$, so that the condition $r\leq m$ for the method of moment is verified.\\
\begin{table}[ht]
  \centering
  \caption{Numerical results of Example \ref{olive}}
    \begin{tabular}{|c|c|c|c|c|}\hline
    Method&BIC&ARI&errorRate&time(s) \\\hline
    em\_km&-10948.64 & 0.4018 &0.262 &\textbf{0.021} \\
    em\_mom&-10946.46 &0.4532 &0.210 &0.508 \\
    em\_mbhc&\textbf{-10625.59} &\textbf{0.5003} & \textbf{0.185} &0.080  \\
    em\_emEM&-10948.72 & 0.4040 &0.260 &0.087 \\\hline
    \end{tabular}%
  \label{table9}
\end{table}%
The results show that the MBHC initialisation strategy yields the largest BIC, the highest ARI and the lowest errorRate values among the other initialisation strategies. Nevertheless, the initialisation by the moment method comes in second position after the MBHC strategy in terms of the BIC, ARI and errorRate values, while the K-means and the emEM initialisation strategies attain almost the same values of the previously mentionned criteria.
\end{example}
This shows that for these datasets which are not well fitted by the mixture of spherical Gaussians, the moment method can still give good initialisations for the EM algorithm, in comparision with the common initialisation strategies.
\begin{example}[MNIST digit image database]\label{mnist} 
The MNIST digit image database \cite{deng2012mnist} is a large database that contains images of $28\times28$ pixels for handwritten digits (0 to 9). Each pixel contains an integer between 0 and 255 that represents the grayscale levels. The number of features is $28\times28=784$. We choose the MNIST digit image dataset which contains 60000 images. We take a subset of this dataset that contains the images of label 0 or 1. The size of the subset is 12665 images. Since the number of features is quite large (784), and we aim to test a spherical Gaussian mixture model, a good practice in this case is to apply one of the dimensionality reduction strategies. Roughly speaking, the dimensionality reduction strategies aim to reduce the number of features such that a high percentage of the information within the dataset is conserved. In other words, the performance in term of accuracy of the clustering methods will not be noticeably affected by this reduction, and on the other hand this will reduce considerably the time of computation. For this purpose, we choose to apply the Principal Component Analysis transformation (PCA) \cite{doi:10.1080/14786440109462720,Jolliffe2011}. We conserve the first five variables given by this transformation (see \Cref{fig3}). The dataset that we consider in this example contains 12665 observations, the number of clusters is $r=2$, and the number of features is $m=5$. We apply the different initialisation strategies and we report the results in \Cref{table10}.\\
\begin{figure}[ht]
\centerline{\includegraphics[width=5in, height=5in]{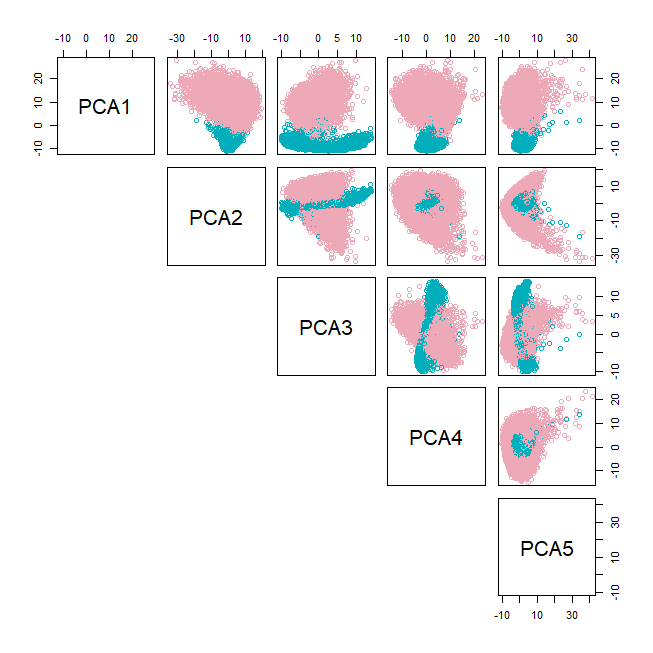}}
\caption{Scatterplot for pairs of variables: upper panels show the first five features obtained by applying the PCA transformation on the dataset of \Cref{mnist}. The graphs points marked according to the true two classes 0 and 1.}
\label{fig3}
\end{figure}
\begin{table}[ht]
  \centering
  \caption{Numerical results of Example \ref{olive}}
    \begin{tabular}{|c|c|c|c|c|}\hline
    Method&BIC&ARI&errorRate&time(s) \\\hline
    em\_km& -384977.3&0.9304&\textbf{0.017}&\textbf{0.537} \\
    em\_mom&-384978.2& \textbf{0.9308}& \textbf{0.017}& 1.87 \\
    em\_mbhc& \textbf{-382746.2}& 0.2445&0.252&543.4  \\
    em\_emEM&-384977.6&0.9301& 0.0177655&1.80 \\\hline
    \end{tabular}%
  \label{table10}
\end{table}%
As we can see, the results given by the method of moments in \Cref{table10} are very satisfactory in comparison with the other initialisation strategies with ARI$=0.9308$. In particular, the method of moments clearly outperforms MBHC method in this regard, in term of accuracy and the time of computation. In fact, the MBHC takes 543.4 seconds without reaching a \emph{good} ARI score. This example sheds some light on the performance of the method of moments. The large number of samples (in this example equal to 12665) does not have a high impact on the computation time, which is not the case, for the MBHC method, where this factor increases significantly its computation time. Moreover, it is true that a large number of features could have a negative impact on the computation time of the method of moments, but it is not a sever limitation since as we saw in this example, this can be efficiently remedied by applying one of the dimensionality reduction techniques. In this regard, some recent work \cite{pereira2022tensor} studies how the computation complexity of the moment method can be reduced while conserving its desirable high accuracy property. Conducting more research in this direction, we believe that the method of moments will have more sophisticated and competitive (in term of computation time) developments in the future. 
\end{example}

\section{Conclusion}

In the context of unsupervised machine learning, the type of models to
be recovered plays an important role. For Gaussian mixture models,
where iterative methods such as Expectation Maximisation algorithms
are applied, the choice of the initialisation is also crucial to
recover an accurate model of a given dataset. We demonstrated in the
experimentation that tensor decomposition techniques can provide a
good initial point for the EM algorithm, and that the moment tensor
method outperforms the other state-of-the-art strategies, when
datasets are well represented by spherical Gaussian mixture
models. For that purpose, we presented a new tensor decomposition
algorithm adapted to the decomposition of identifiable tensors with
low interpolation degree, which applies to a 3$^{\mathrm rd}$ order
moment tensors associated to the data distribution as we have shown.

\paragraph{\bf Acknowledgement}
We would like to thank the anonymous reviewers for their valuable remarks that helped us improving the paper. 
\noindent{}\small
\newcommand{\etalchar}[1]{$^{#1}$}

\end{document}